\newtheorem{theorem}{Theorem}
\theoremstyle{plain}
\newtheorem{acknowledgement}{Acknowledgement}
\newtheorem{corollary}{Corollary}
\newtheorem{problem}{Problem}
\newtheorem{proposition}{Proposition}
\newtheorem{remark}{Remark}
\numberwithin{equation}{section}
\begin{document}
\title[]{Translation surfaces of coordinate finite type}
\author{Hassan Al-Zoubi}
\address{Department of Mathematics, Al-Zaytoonah University of Jordan,
P.O.Box 130\\
Amman, Jordan 11733}
\email{dr.hassanz@zuj.edu.jo}
\author{Stylianos Stamatakis}
\address{Department of Mathematics, Aristotle University of Thessaloniki}
\email{stamata@math.auth.gr}
\author{Waseem Al-Mashaleh}
\address{Department of Mathematics, Al-Zaytoonah University of Jordan}
\email{w.almashaleh@zuj.edu.jo}
\author{Mohammed Awadallah}
\address{Department of Computer Science, Al-Aqsa University}
\email{ma.awadallah@alaqsa.edu.ps}
\date{}
\subjclass[2010]{ 53A05, 47A75}
\keywords{ Surfaces in Euclidean space, Surfaces of coordinate finite type,
Beltrami operator. }

\begin{abstract}
We consider translation surfaces in the 3-dimensional Euclidean space which are of coordinate finite type with respect to the third fundamental form $III $, i.e., their position vector $\boldsymbol{x}$ satisfies the relation $\Delta ^{III}\boldsymbol{x}=\Lambda \boldsymbol{x,}$ where $\Lambda $ is a square matrix of order 3. We show that Sherk's minimal surface is the only translation surface satisfying $\Delta^{III}\boldsymbol{x}=\Lambda\boldsymbol{x}$.
\end{abstract}

\maketitle

\section{Introduction}

Let $M^{n}$ be a (connected) submanifold in the $m$-dimensional Euclidean space $E^{m}$. Let $\boldsymbol{x},\boldsymbol{H}$ be the position vector
field and the mean curvature field of $M^{n}$ respectively. Denote by $\Delta ^{I}$ the second Beltrami-Laplace operator corresponding to the first
fundamental form $I$ of $M^{n}$. Then, it is well known that \cite{R3}
\begin{equation}
\Delta ^{I}\boldsymbol{x}=-n\boldsymbol{H}.  \notag
\end{equation}

From this formula one can see that $M^{n}$ is a minimal submanifold if and only if all coordinate functions, restricted to $M^{n}$, are eigenfunctions
of $\Delta ^{I}$ with eigenvalue $\lambda =0$. Moreover in \cite{R15} T.Takahashi showed that a submanifold $M^{n}$ for which $\Delta^{I}\boldsymbol{x} =\lambda\boldsymbol{x}$, i.e. for which all coordinate functions are eigenfunctions of $\Delta ^{I}$ with the same eigenvalue $\lambda$, is either minimal in $E^{m}$ with eigenvalue $\lambda=0$ or minimal in a hypersphere of $E^{m}$ with eigenvalue $\lambda>0$. Although the class of finite type submanifolds
in an arbitrary dimensional Euclidean spaces is very large, very little is known about surfaces of finite type in the Euclidean 3-space $E^{3}$.
Actually, so far, the only known surfaces of finite type corresponding to the first fundamental form in the Euclidean 3-space are the minimal
surfaces, the circular cylinders and the spheres. So in \cite{R5} B.-Y. Chen mentions the following problem

\begin{problem}
\label{(1)}Determine all surfaces of finite Chen $I$-type in $E^{3}$.
\end{problem}

In order to provide an answer to the above problem, important families of surfaces were studied by different authors by proving that finite type ruled
surfaces, finite type quadrics, finite type tubes, finite type cyclides of Dupin and finite type spiral surfaces are surfaces of the only known
examples in $E^{3}$. However, for another classical families of surfaces, such as surfaces of revolution, translation surfaces as well as helicoidal
surfaces, the classification of its finite type surfaces is not known yet. For a more details, the reader can refer to \cite{R6}.

Later in \cite{R11} O. Garay generalized T. Takahashi's condition studied surfaces in $E^{3}$ for which all coordinate functions $\left(
x_{1},x_{2},x_{3}\right)$ of $\boldsymbol{x}$ satisfy $\Delta^{I}\boldsymbol{x}_{i} = \lambda_{i}x_{i}, i = 1,2,3$, not necessarily with the same
eigenvalue. Another generalization was studied in \cite{R9} for which surfaces in $E^{3}$ satisfy the condition $\Delta^{I}\boldsymbol{x}= A\boldsymbol{x} + B$ $(\ddag)$ where $A \in\mathbb{Re}^{3\times3} ;B \in\mathbb{Re}^{3\times1}$. It was shown that a surface $S$ in $E^{3}$ satisfies $(\ddag)$ if and only if it is an open part of a minimal surface, a sphere, or a circular cylinder. Surfaces satisfying $(\ddag)$ are said to be of coordinate finite type.

In the thematic circle of the surfaces of finite type in the Euclidean space in $E^{3}$, S. Stamatakis and H. Al-Zoubi in \cite{R13} restored attention
to this theme by introducing the notion of surfaces of finite type corresponding to the second or third fundamental forms of $S$ in the
following way: A surface $S$ is said to be of finite type corresponding to the fundamental form $J$, or briefly of finite $J$-type, $J=II,III$, if the
position vector $\boldsymbol{x}$ of $S$ can be written as a finite sum of nonconstant eigenvectors of the operator $\Delta ^{J}$, that is if
\begin{equation}
\boldsymbol{x}=\boldsymbol{x}_{0}+\sum_{i=1}^{k}\boldsymbol{x}_{i},\ \ \ \ \
\ \Delta ^{J}\boldsymbol{x}_{i}=\lambda _{i}\boldsymbol{x}_{i},\ \ \
i=1,...,k,  \label{r1}
\end{equation}%
where $\boldsymbol{x}_{0}$ is a fixed vector and $\boldsymbol{x}_{1},...,%
\boldsymbol{x}_{k}$ are nonconstant maps such that $\Delta ^{J}\boldsymbol{\
x}_{i}=\lambda _{i}\boldsymbol{x}_{i},i=1,...,k$. If, in particular, all
eigenvalues $\lambda _{1},\lambda _{2},...,\lambda _{k}$ are mutually
distinct, then $S$ is said to be of $J$-type $k$, otherwise $S$ is said to
be of infinite type. When $\lambda _{i}=0$ for some \emph{i} = 1,..., \emph{k%
}, then $S$ is said to be of null $J$-type $k$.

In general when $S$ is of finite type $k$, it follows from (\ref{r1}) that there exist a monic polynomial, say $R(x)\neq 0,$ such that $R(\Delta ^{J})(\boldsymbol{x}-\boldsymbol{c})=\mathbf{0}.$ If we suppose that $R(x)=x^{k}+\sigma _{1}x^{k-1}+...+\sigma _{k-1}x+\sigma _{k},$ then coefficients $\sigma _{i}$\ are given by

\begin{eqnarray}
\sigma _{1} &=&-(\lambda _{1}+\lambda _{2}+...+\lambda _{k}),  \notag \\
\sigma _{2} &=&(\lambda _{1}\lambda _{2}+\lambda _{1}\lambda_{3}+...+\lambda
_{1}\lambda _{k}+\lambda _{2}\lambda _{3}+...+\lambda _{2}\lambda
_{k}+...+\lambda _{k-1}\lambda _{k}),  \notag \\
\sigma _{3} &=&-(\lambda _{1}\lambda _{2}\lambda _{3}+...+\lambda
_{k-2}\lambda _{k-1}\lambda _{k}),  \notag \\
&&.............................................  \notag \\
\sigma _{k} &=&(-1)^{k}\lambda _{1}\lambda _{2}...\lambda _{k}.  \notag
\end{eqnarray}

Therefore the position vector $\boldsymbol{x}$ satisfies the following equation \cite{R3}

\begin{equation*}
(\Delta ^{J})^{k}\boldsymbol{x}+\sigma _{1}(\Delta ^{J})^{k-1}\boldsymbol{x}%
+...+\sigma _{k}(\boldsymbol{x}-\boldsymbol{c})=\boldsymbol{0}.
\end{equation*}

In this paper we will pay attention to surfaces of finite $III$-type. Firstly, we will establish a formula for $\Delta ^{III}\boldsymbol{x}$ by
using Cartan's method of the moving frame. Further, we continue our study by proving finite coordinate type surfaces for an important class of surfaces,
namely, translation surfaces in $E^{3}$.

\section{Preliminaries}

Let $S$ be a (connected) surface in the Euclidean 3-space $E^{3}$, whose
Gaussian curvature $K$ never vanishes. Let $\wp =\{\boldsymbol{\varepsilon
_{1}}(u,v),\boldsymbol{\varepsilon _{2}}(u,v),\boldsymbol{\varepsilon _{3}}%
(u,v)\}$ is a moving frame of $S$, $\boldsymbol{\varepsilon _{3}}=%
\boldsymbol{n}$ is the Gauss map of $S$ and $\det (\boldsymbol{\varepsilon
_{1}},\boldsymbol{\varepsilon _{2}},\boldsymbol{\varepsilon _{3}})=1$. Then
it is well known that there are linear differential forms $\omega
_{1},\omega _{2},\omega _{31},\omega _{32}$ and $\omega _{12}$, such that
\cite{R10}

\begin{equation*}
d\boldsymbol{x}=\omega _{1}\boldsymbol{\varepsilon _{1}}+\omega _{2}%
\boldsymbol{\varepsilon _{2}},\ \ \ d\boldsymbol{n}=\omega _{31}\boldsymbol{%
\varepsilon _{1}}+\omega _{32}\boldsymbol{\varepsilon _{2}},
\end{equation*}

\begin{center}
\begin{equation*}
d\boldsymbol{\varepsilon _{1}}=\omega _{12}\boldsymbol{\varepsilon _{2}}%
-\omega _{31}\boldsymbol{\varepsilon _{3}},\ \ \ d\boldsymbol{\varepsilon
_{2}}=-\omega _{12}\boldsymbol{\varepsilon _{1}}-\omega _{32}\boldsymbol{%
\varepsilon _{3}},
\end{equation*}
\end{center}

and functions $a,b,c,q_{1},q_{2}$ of $u,v$ such that

\begin{equation*}
\omega _{31}=-a\omega _{1}-b\omega _{2},\ \ \ \omega _{32}=-b\omega
_{1}-c\omega _{2},\ \ \ \omega _{12}=q_{1}\omega _{1}+q_{2}\omega _{2}.
\end{equation*}

We can choose the moving frame of $S$, such that the vectors $\boldsymbol{%
\varepsilon_{1}},\boldsymbol{\varepsilon_{2}}$ are the principle directions
of $S$. Then $a$, $c$ are the principle curvatures of $S$ and $b=0$, so the
differential forms $\omega_{1}$ and $\omega_{2}$ become

\begin{equation}
\omega_{1} =-\frac{1}{a}\omega_{31}, \ \ \ \ \omega_{2} =-\frac{1}{c}%
\omega_{32}.  \notag
\end{equation}
The Gauss and mean curvature are respectively

\begin{equation}
K= ac, \ \ \ \ H=\frac{a+c}{2}.  \notag
\end{equation}

Let $\nabla _{1}f,\nabla _{2}f$ be the derivatives of Pfaff of a function $%
f(u,v)\in C^{1}$ along the curves $\omega _{2}=0,\omega _{1}=0$
respectively. Then we have the following well known relations \cite{R2}

\begin{eqnarray*}
\nabla _{1}\boldsymbol{x} &=&\boldsymbol{\varepsilon _{1}},\ \ \ \nabla _{2}%
\boldsymbol{x}=\boldsymbol{\varepsilon _{2}}, \\
\nabla _{1}\boldsymbol{\varepsilon _{1}} &=&q_{1}\boldsymbol{\varepsilon _{2}%
}+a\boldsymbol{n},\ \ \ \nabla _{2}\boldsymbol{\varepsilon _{1}}=q_{2}%
\boldsymbol{\varepsilon _{2}}, \\
\nabla _{1}\boldsymbol{\varepsilon _{2}} &=&-q_{1}\boldsymbol{\varepsilon
_{1}},\ \ \ \ \nabla _{2}\boldsymbol{\varepsilon _{2}}=-q_{2}\boldsymbol{%
\varepsilon _{1}}+c\boldsymbol{n}, \\
\nabla _{1}\boldsymbol{n} &=&-a\boldsymbol{\varepsilon _{1}},\ \ \ \nabla
_{2}\boldsymbol{n}=-c\boldsymbol{\varepsilon _{2}},
\end{eqnarray*}

We denote by $\widetilde{\nabla}_{1}f$ and $\widetilde{\nabla}_{2}f$ the derivatives of Pfaff of $f$ along the curves $\omega_{32}=0, \omega_{31}=0$
respectively. One finds

\begin{equation}
\widetilde{\nabla}_{1}f =-\frac{1}{a}\nabla_{1}f,\ \ \ \widetilde{\nabla}_{2}f =-\frac{1}{c}\nabla_{2}f.  \notag
\end{equation}

It follows that

\begin{equation}
\widetilde{\nabla }_{1}\boldsymbol{x}=-\frac{1}{a}\boldsymbol{\varepsilon
_{1}},\ \ \ \widetilde{\nabla }_{2}\boldsymbol{x}=-\frac{1}{c}\boldsymbol{%
\varepsilon _{2}},  \label{r3}
\end{equation}%
\begin{equation}
\widetilde{\nabla }_{1}\boldsymbol{\varepsilon _{1}}=p_{1}\boldsymbol{%
\varepsilon _{2}}-\boldsymbol{n},\ \ \ \widetilde{\nabla }_{2}\boldsymbol{%
\varepsilon _{1}}=p_{2}\boldsymbol{\varepsilon _{2}},  \label{r4}
\end{equation}%
\begin{equation}
\widetilde{\nabla }_{1}\boldsymbol{\varepsilon _{2}}=-p_{1}\boldsymbol{%
\varepsilon _{1}},\ \ \ \ \widetilde{\nabla }_{2}\boldsymbol{\varepsilon _{2}%
}=-p_{2}\boldsymbol{\varepsilon _{1}}-\boldsymbol{n},  \label{r5}
\end{equation}%
\begin{equation}
\widetilde{\nabla }_{1}\boldsymbol{n}=\boldsymbol{\varepsilon _{1}},\ \ \
\widetilde{\nabla }_{2}\boldsymbol{n}=\boldsymbol{\varepsilon _{2}},  \notag
\end{equation}%
where $p_{1}=-\frac{1}{a}q_{1},p_{2}=-\frac{1}{c}q_{2}$ are the geodesic
curvatures of the spherical curves $\omega _{32}=0$ and $\omega _{31}=0$
respectively. The Mainardi-Codazzi equations have the following forms

\begin{equation}
\widetilde{\nabla }_{1}\frac{1}{c}=p_{2}\Big(\frac{1}{a}-\frac{1}{c}\Big),\
\ \ \widetilde{\nabla }_{2}\frac{1}{a}=p_{1}\Big(\frac{1}{a}-\frac{1}{c}\Big)%
.  \label{r6}
\end{equation}

Let $f$ be a sufficient differentiable function on $S$. Then the second differential parameter of Beltrami corresponding to the fundamental form $%
III $ of $S$ is defined by 
\begin{equation}  \label{777}
\Delta ^{III}f= -\widetilde{\nabla}_{1}\widetilde{\nabla}_{1}f-\widetilde{%
\nabla}_{2}\widetilde{\nabla}_{2}f-p_{2}\widetilde{\nabla}_{1}f+p_{1}%
\widetilde{\nabla}_{2}f.
\end{equation}

Applying (\ref{777}) to the position vector $\boldsymbol{x}$, gives

\begin{equation}
\Delta ^{III}\boldsymbol{x}= -\widetilde{\nabla}_{1}\widetilde{\nabla}_{1}%
\boldsymbol{x}-\widetilde{\nabla}_{2}\widetilde{\nabla}_{2}\boldsymbol{x}%
-p_{2}\widetilde{\nabla}_{1}\boldsymbol{x}+p_{1}\widetilde{\nabla}_{2}%
\boldsymbol{x}.  \notag
\end{equation}

From (\ref{r3}) we obtain

\begin{equation}
\Delta ^{III}\boldsymbol{x}=\widetilde{\nabla }_{1}\Big(\frac{1}{a}%
\boldsymbol{\varepsilon _{1}}\Big)+\widetilde{\nabla }_{2}\Big(\frac{1}{c}%
\boldsymbol{\varepsilon _{2}}\Big)+\frac{1}{a}p_{2}\boldsymbol{\varepsilon
_{1}}-\frac{1}{c}p_{1}\boldsymbol{\varepsilon _{2}}.  \label{r8}
\end{equation}

Using (\ref{r4}) and (\ref{r5}), equation (\ref{r8}) becomes

\begin{equation}
\Delta ^{III}\boldsymbol{x}=\Big(\widetilde{\nabla }_{1}\frac{1}{a}\Big)%
\boldsymbol{\varepsilon _{1}}+\Big(\frac{1}{a}-\frac{1}{c}\Big)p_{2}%
\boldsymbol{\varepsilon _{1}}+\Big(\widetilde{\nabla }_{2}\frac{1}{c}\Big)%
\boldsymbol{\varepsilon _{2}}+\Big(\frac{1}{a}-\frac{1}{c}\Big)p_{1}%
\boldsymbol{\varepsilon _{2}}-\Big(\frac{1}{a}+\frac{1}{c}\Big)\boldsymbol{n}%
.  \label{r9}
\end{equation}

Taking into account the Mainardi-Codazzi equations (\ref{r6}), so equation (%
\ref{r9}) reduces to

\begin{equation*}
\Delta ^{III}\boldsymbol{x}=\Big(\widetilde{\nabla }_{1}\big(\frac{1}{a}+%
\frac{1}{c}\big)\Big)\boldsymbol{\varepsilon _{1}}+\Big(\widetilde{\nabla }%
_{2}\big(\frac{1}{a}+\frac{1}{c}\big)\Big)\boldsymbol{\varepsilon _{2}}-\big(%
\frac{1}{a}+\frac{1}{c}\big)\boldsymbol{n}
\end{equation*}%
or equivalently, (see \cite{R13})

\begin{equation}
\Delta ^{III}\boldsymbol{x}=grad^{III}\Big(\frac{2H}{K}\Big)-\Big(\frac{2H}{K%
}\Big)\boldsymbol{n}.  \label{r11}
\end{equation}

\begin{remark}
S. Stamatakis and H. Al-zoubi proved in \cite{R13} relation (\ref{r11}) by
using tensors calculus.
\end{remark}

From (\ref{r11}) we obtain the following results which were proved in \cite%
{R13}.

\begin{theorem}
\label{T1} A surface $S$ in $E^{3}$ is of 0-type 1 corresponding to the third fundamental form if and only if $S$ is minimal.
\end{theorem}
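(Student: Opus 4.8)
The plan is to read the result off directly from the structural formula (\ref{r11}) for $\Delta ^{III}\boldsymbol{x}$, exploiting the fact that its two terms lie in complementary subspaces at each point. First I would pin down what ``0-type 1'' means here: taking $k=1$ and $\lambda _{1}=0$ in (\ref{r1}), the position vector decomposes as $\boldsymbol{x}=\boldsymbol{x}_{0}+\boldsymbol{x}_{1}$ with $\boldsymbol{x}_{1}$ nonconstant and $\Delta ^{III}\boldsymbol{x}_{1}=\boldsymbol{0}$. Since $\boldsymbol{x}_{0}$ is constant, this is equivalent to $\Delta ^{III}\boldsymbol{x}=\boldsymbol{0}$ together with $\boldsymbol{x}$ nonconstant, the latter being automatic for a surface. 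So the whole statement reduces to the equivalence $\Delta ^{III}\boldsymbol{x}=\boldsymbol{0}\Longleftrightarrow H\equiv 0$.

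For the implication ``minimal $\Rightarrow $ 0-type 1'', suppose $H=0$. Because the standing hypothesis is $K\neq 0$, the function $\tfrac{2H}{K}$ is a well-defined smooth function and vanishes identically. Substituting into (\ref{r11}), both the tangential term $grad^{III}\!\left(\tfrac{2H}{K}\right)$ and the normal term $\left(\tfrac{2H}{K}\right)\boldsymbol{n}$ are zero, so $\Delta ^{III}\boldsymbol{x}=\boldsymbol{0}$; choosing $\boldsymbol{x}_{0}=\boldsymbol{0}$ and $\boldsymbol{x}_{1}=\boldsymbol{x}$ then exhibits $S$ as of 0-type 1.

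For the converse, assume $\Delta ^{III}\boldsymbol{x}=\boldsymbol{0}$. In (\ref{r11}) the first term $grad^{III}\!\left(\tfrac{2H}{K}\right)$ is tangent to $S$, i.e. a combination of $\boldsymbol{\varepsilon _{1}},\boldsymbol{\varepsilon _{2}}$, while the second is a scalar multiple of the unit normal $\boldsymbol{n}$. Since $\{\boldsymbol{\varepsilon _{1}},\boldsymbol{\varepsilon _{2}},\boldsymbol{n}\}$ is a frame of $E^{3}$, the tangential and normal parts must vanish independently. The normal equation reads $\left(\tfrac{2H}{K}\right)\boldsymbol{n}=\boldsymbol{0}$, hence $\tfrac{2H}{K}=0$, and because $K\neq 0$ everywhere this forces $H=0$; thus $S$ is minimal.

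The only step requiring genuine care is the separation into tangential and normal components: one must argue that these two pieces vanish independently rather than cancelling against each other. This is immediate from the frame $\{\boldsymbol{\varepsilon _{1}},\boldsymbol{\varepsilon _{2}},\boldsymbol{n}\}$ being a pointwise basis, so I do not expect a real obstacle here; the theorem is in essence a direct corollary of (\ref{r11}). I would also remark that, once $H\equiv 0$ is established, the tangential equation $grad^{III}\!\left(\tfrac{2H}{K}\right)=\boldsymbol{0}$ holds automatically, confirming the consistency of the two components.
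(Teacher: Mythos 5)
Your proposal is correct and follows essentially the same route as the paper: Theorem \ref{T1} is presented there as an immediate consequence of formula (\ref{r11}), obtained by noting that the tangential part $grad^{III}\bigl(\tfrac{2H}{K}\bigr)$ and the normal part $\bigl(\tfrac{2H}{K}\bigr)\boldsymbol{n}$ must vanish separately, so that $\Delta^{III}\boldsymbol{x}=\boldsymbol{0}$ is equivalent to $\tfrac{2H}{K}=0$, i.e.\ to $H\equiv 0$ under the standing hypothesis $K\neq 0$. Your reduction of ``0-type 1'' to the condition $\Delta^{III}\boldsymbol{x}=\boldsymbol{0}$ and the pointwise frame argument are exactly the details the paper leaves implicit (citing \cite{R13}), so there is nothing to add.
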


\begin{theorem}
\label{T2} A surface $S$ in $E^{3}$ is of $III$-type 1 if and only if $S$ is part of a sphere.
\end{theorem}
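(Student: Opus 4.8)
The plan is to recast the "$III$-type $1$" hypothesis into a workable vector equation and then feed it into the master formula \eqref{r11}. By the definition of finite type, $S$ being of $III$-type $1$ means $\boldsymbol{x}=\boldsymbol{x}_{0}+\boldsymbol{x}_{1}$ with $\Delta^{III}\boldsymbol{x}_{1}=\lambda\boldsymbol{x}_{1}$ and $\lambda\neq 0$ (the eigenvalue being nonzero is what distinguishes type $1$ from null type $1$, i.e.\ the minimal case of Theorem \ref{T1}); since $\Delta^{III}\boldsymbol{x}_{0}=\boldsymbol{0}$, this is precisely the condition $\Delta^{III}\boldsymbol{x}=\lambda(\boldsymbol{x}-\boldsymbol{x}_{0})$. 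Abbreviating $\Phi:=\frac{2H}{K}=\frac{1}{a}+\frac{1}{c}$, formula \eqref{r11} reads $\Delta^{III}\boldsymbol{x}=(\widetilde{\nabla}_{1}\Phi)\boldsymbol{\varepsilon_{1}}+(\widetilde{\nabla}_{2}\Phi)\boldsymbol{\varepsilon_{2}}-\Phi\boldsymbol{n}$, which is the identity I will compare against $\lambda(\boldsymbol{x}-\boldsymbol{x}_{0})$.

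For the sufficiency direction I would argue directly. On a sphere of radius $R$ the principal curvatures coincide, so $\Phi$ equals the nonzero constant $\pm 2R$; hence $\operatorname{grad}^{III}\Phi$ vanishes, and because the Gauss map is the radial field $\boldsymbol{n}=\pm(\boldsymbol{x}-\boldsymbol{x}_{0})/R$, relation \eqref{r11} collapses to $\Delta^{III}\boldsymbol{x}=2(\boldsymbol{x}-\boldsymbol{x}_{0})$. This exhibits the sphere as being of $III$-type $1$ with eigenvalue $\lambda=2$.

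The substance lies in the necessity direction. Starting from $(\widetilde{\nabla}_{1}\Phi)\boldsymbol{\varepsilon_{1}}+(\widetilde{\nabla}_{2}\Phi)\boldsymbol{\varepsilon_{2}}-\Phi\boldsymbol{n}=\lambda(\boldsymbol{x}-\boldsymbol{x}_{0})$, I would differentiate this vector identity by applying the Pfaff operators $\widetilde{\nabla}_{1}$ and $\widetilde{\nabla}_{2}$, substituting the structure equations \eqref{r3}, \eqref{r4}, \eqref{r5} for the derivatives of $\boldsymbol{\varepsilon_{1}},\boldsymbol{\varepsilon_{2}},\boldsymbol{n}$ and $\boldsymbol{x}$, and then splitting each resulting identity into its $\boldsymbol{\varepsilon_{1}},\boldsymbol{\varepsilon_{2}},\boldsymbol{n}$ components. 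The decisive point is that the right-hand sides $\lambda\widetilde{\nabla}_{i}\boldsymbol{x}$ are purely tangential, equalling $-\tfrac{\lambda}{a}\boldsymbol{\varepsilon_{1}}$ and $-\tfrac{\lambda}{c}\boldsymbol{\varepsilon_{2}}$ by \eqref{r3}. Matching the normal components therefore forces $\widetilde{\nabla}_{1}\Phi=\widetilde{\nabla}_{2}\Phi=0$, so $\Phi$ is constant on the connected surface $S$. Substituting this back, the tangential components reduce to the two scalar relations $\Phi=\frac{\lambda}{a}$ and $\Phi=\frac{\lambda}{c}$.

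Comparing these and using $\lambda\neq 0$ yields $a=c$, so $S$ is totally umbilic; together with the standing hypothesis $K\neq 0$ (which rules out the plane), the classical classification of umbilic surfaces forces $S$ to be an open part of a sphere, completing the argument. I expect the only delicate bookkeeping to be the careful collection of frame components after differentiation, in particular tracking the two separate contributions to the normal part (one from $\widetilde{\nabla}_{i}\boldsymbol{\varepsilon_{i}}$ and one from $\widetilde{\nabla}_{i}\boldsymbol{n}$) whose sum is what forces $\widetilde{\nabla}_{i}\Phi=0$; beyond this no genuinely hard step arises once \eqref{r11} and the structure equations are in hand.
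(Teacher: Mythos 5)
Your proof is correct, and it takes exactly the route the paper itself points to: the paper does not prove Theorem \ref{T2} in-text (it quotes it from \cite{R13} as a consequence of formula (\ref{r11})), and your argument is precisely such a derivation — differentiating (\ref{r11}) set equal to $\lambda(\boldsymbol{x}-\boldsymbol{x}_{0})$, matching frame components to force $\widetilde{\nabla}_{i}\Phi=0$ and $a=c$, then invoking the classification of umbilic surfaces with $K\neq 0$. One inconsequential slip in your closing remark: the second contribution to the normal component comes from differentiating the coefficient $\Phi$ of $\boldsymbol{n}$ (giving $-(\widetilde{\nabla}_{i}\Phi)\boldsymbol{n}$), not from $\Phi\,\widetilde{\nabla}_{i}\boldsymbol{n}$, which is tangential; your actual component matching is nevertheless carried out correctly.
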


\begin{corollary}
\label{C1.1} The Gauss map of every surface $S$ in $E^{3}$ is of $III$-type 1. The corresponding eigenvalue is $\lambda=2$.
\end{corollary}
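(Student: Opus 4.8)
The plan is to apply the Beltrami operator (\ref{777}) directly to the Gauss map $\boldsymbol{n}=\boldsymbol{\varepsilon _{3}}$ and to verify that $\Delta ^{III}\boldsymbol{n}=2\boldsymbol{n}$; by the definition of finite $III$-type this exhibits $\boldsymbol{n}$ as a single nonconstant eigenvector of $\Delta ^{III}$ with eigenvalue $\lambda =2$, hence of $III$-type 1.

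First I would record the first-order Pfaffian derivatives of the Gauss map already listed in the Preliminaries, namely $\widetilde{\nabla }_{1}\boldsymbol{n}=\boldsymbol{\varepsilon _{1}}$ and $\widetilde{\nabla }_{2}\boldsymbol{n}=\boldsymbol{\varepsilon _{2}}$. Differentiating once more and substituting (\ref{r4}) and (\ref{r5}), I would obtain the second-order derivatives
\begin{equation*}
\widetilde{\nabla }_{1}\widetilde{\nabla }_{1}\boldsymbol{n}=\widetilde{\nabla }_{1}\boldsymbol{\varepsilon _{1}}=p_{1}\boldsymbol{\varepsilon _{2}}-\boldsymbol{n},\qquad \widetilde{\nabla }_{2}\widetilde{\nabla }_{2}\boldsymbol{n}=\widetilde{\nabla }_{2}\boldsymbol{\varepsilon _{2}}=-p_{2}\boldsymbol{\varepsilon _{1}}-\boldsymbol{n}.
\end{equation*}

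Substituting these four quantities into (\ref{777}) with $f=\boldsymbol{n}$ gives
\begin{equation*}
\Delta ^{III}\boldsymbol{n}=-(p_{1}\boldsymbol{\varepsilon _{2}}-\boldsymbol{n})-(-p_{2}\boldsymbol{\varepsilon _{1}}-\boldsymbol{n})-p_{2}\boldsymbol{\varepsilon _{1}}+p_{1}\boldsymbol{\varepsilon _{2}}.
\end{equation*}
Here the tangential contributions cancel in pairs: the term $-p_{1}\boldsymbol{\varepsilon _{2}}$ from the first bracket cancels against $+p_{1}\widetilde{\nabla }_{2}\boldsymbol{n}=+p_{1}\boldsymbol{\varepsilon _{2}}$, while $+p_{2}\boldsymbol{\varepsilon _{1}}$ from the second bracket cancels against $-p_{2}\widetilde{\nabla }_{1}\boldsymbol{n}=-p_{2}\boldsymbol{\varepsilon _{1}}$. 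Only the two normal contributions $+\boldsymbol{n}$ survive, yielding $\Delta ^{III}\boldsymbol{n}=2\boldsymbol{n}$.

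This calculation is essentially mechanical once the structure equations are in place, so there is no serious obstacle; the only point requiring care is the bookkeeping of signs, since the geodesic-curvature terms $p_{1},p_{2}$ enter both through the second Pfaffian derivatives and through the explicit first-order terms in (\ref{777}), and one must confirm that these two sources cancel exactly rather than reinforcing. I would emphasize that the result holds for an arbitrary surface $S$ of nonvanishing Gaussian curvature and that the eigenvalue $\lambda =2$ is independent of the particular surface, which is precisely why the statement can be asserted for every such $S$.
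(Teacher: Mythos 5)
Your computation is correct: using the paper's definition (\ref{777}) together with (\ref{r4}), (\ref{r5}) and the relations $\widetilde{\nabla }_{1}\boldsymbol{n}=\boldsymbol{\varepsilon _{1}}$, $\widetilde{\nabla }_{2}\boldsymbol{n}=\boldsymbol{\varepsilon _{2}}$, the tangential terms cancel exactly as you say and $\Delta ^{III}\boldsymbol{n}=2\boldsymbol{n}$; since $K$ never vanishes, $\boldsymbol{n}$ is nonconstant, so the Gauss map is of $III$-type 1 with $\lambda =2$. Your route is, however, not the one the paper takes. The paper does not recompute anything: it presents the corollary as a consequence of formula (\ref{r11}), $\Delta ^{III}\boldsymbol{x}=grad^{III}\left(\frac{2H}{K}\right)-\left(\frac{2H}{K}\right)\boldsymbol{n}$, proved in the cited reference. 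The point there is that the spherical image of $S$ is part of the unit sphere, whose position vector is (up to sign) its own normal and on which $\frac{2H}{K}$ is constant, so the gradient term vanishes; since the third fundamental form of $S$ is exactly the metric induced by the Gauss map, applying (\ref{r11}) to that sphere gives $\Delta ^{III}\boldsymbol{n}=2\boldsymbol{n}$ immediately, and the corollary appears as a companion to Theorem \ref{T2} (spheres are the $III$-type 1 surfaces). Your direct frame computation buys self-containedness: you never have to identify $\Delta ^{III}$ of $S$ with the Laplacian of the spherical image, nor track the sign conventions for the sphere's principal curvatures, and the universality of $\lambda =2$ is transparent because the curvatures $a$, $c$ drop out of the calculation entirely. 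The paper's deduction buys economy: no new computation is needed, and the statement is exhibited as a special case of the general formula rather than an independent fact. Both arguments are valid; yours is essentially the derivation of (\ref{r9}) repeated with $\boldsymbol{x}$ replaced by $\boldsymbol{n}$, which is why it comes out shorter and cleaner.
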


Up to now, the only known surfaces of finite $III$-type in $E^{3}$ are parts of spheres, minimal surfaces and parallel surfaces to minimal ones
which are of null $III$-type 2. So the following question seems to be interesting:

\begin{problem}
\label{p1}Other than the surfaces mentioned above, which surfaces in $E^{3}$ are of finite $III$-type?
\end{problem}

Another generalization of the above problem is to study surfaces in $E^{3}$ whose position vector $\boldsymbol{x}$ satisfies
\begin{equation}
\Delta ^{III}\boldsymbol{x}=\Lambda \boldsymbol{x,}  \label{r12}
\end{equation}%
where $\Lambda \in \mathbb{Re}^{3\times 3}$.

From this point of view, we also pose the following problem

\begin{problem}
\label{p2}Classify all surfaces in $E^{3}$ whose position vector $\boldsymbol{x}$ satisfies relation (\ref{r12}).
\end{problem}

Concerning this problem, in \cite{R14} S. Stamatakis and H. Al-Zoubi studied the class of surfaces of revolution and they proved that:

\begin{theorem}
\label{TC} A surface of revolution $S$ in $%
\mathbb{R}
^{3},$ satisfies (\ref{r12}), if and only if S is a catenoid or a part of a
sphere.
\end{theorem}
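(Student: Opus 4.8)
The plan is to feed the structural formula (\ref{r11}), namely $\Delta^{III}\boldsymbol{x}=grad^{III}\Phi-\Phi\,\boldsymbol{n}$ with $\Phi:=\frac{2H}{K}=\frac{1}{a}+\frac{1}{c}$, into the condition (\ref{r12}) on a rotational profile. I would place the axis along the $x_{3}$-axis and parametrize $S$ by a unit-speed meridian, $\boldsymbol{x}(s,v)=(r(s)\cos v,\,r(s)\sin v,\,h(s))$ with $r'^{2}+h'^{2}=1$, writing $r'=\cos\theta$, $h'=\sin\theta$. Then $a=\theta'$, $c=\frac{\sin\theta}{r}$, the Gaussian curvature $K=ac$ is nonzero precisely when $\theta'\neq0$ and $\sin\theta\neq0$, and $\Phi=\frac{1}{\theta'}+\frac{r}{\sin\theta}$ depends on $s$ alone. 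Since $\Phi$ is a function of $s$ only, $grad^{III}\Phi$ is tangential and points along the meridian, so the right-hand side $\Lambda\boldsymbol{x}$ must share this structure; comparing the $\cos v$, $\sin v$ and constant parts in each Cartesian coordinate forces $\Lambda=\mathrm{diag}(\mu,\mu,\nu)$ with $\mu,\nu$ constant.

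Next I would split the vector identity (\ref{r12}) into its components along $\boldsymbol{n}$ and along the meridian tangent $\boldsymbol{\varepsilon_{1}}$. Using $\boldsymbol{n}=(-h'\cos v,-h'\sin v,r')$, the normal component yields the scalar equation $\Phi=\mu r h'-\nu h r'$, call it (I), while the tangential component yields $\widetilde{\nabla}_{1}\Phi=\mu r r'+\nu h h'$, call it (II), where $\widetilde{\nabla}_{1}\Phi=-\frac{1}{\theta'}\Phi'$. Differentiating (I) along $s$, substituting $h''=\theta' r'$ and $r''=-\theta' h'$, and then eliminating the common term $\mu rr'+\nu hh'$ by means of (II), collapses the two equations into the single clean relation $\Phi'=\frac{\mu-\nu}{2}\,r'h'=\frac{\mu-\nu}{4}\sin 2\theta$.

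From here the classification splits according to whether $\mu=\nu$. If $\mu=\nu$, the clean relation gives $\Phi'\equiv0$, whereupon (II) reduces to $\mu\,(r^{2}+h^{2})'=0$. When $\mu\neq0$ this gives $r^{2}+h^{2}=\mathrm{const}$, so the meridian is a circular arc centred on the axis and $S$ is part of a sphere; when $\mu=\nu=0$ we get $\Phi\equiv0$, i.e. $H\equiv0$, and the only minimal surface of revolution with $K\neq0$ is the catenoid (Euler). Conversely, a direct check via (\ref{r11}) confirms that the catenoid satisfies (\ref{r12}) with $\Lambda=0$ and the sphere with $\Lambda$ equal to twice the identity matrix, in accordance with Theorems \ref{T1} and \ref{T2}.

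The main obstacle is to exclude the case $\mu\neq\nu$. Here I would pass to $\theta$ as independent variable (legitimate since $\theta'\neq0$) and set $w:=\frac{1}{\theta'}$, so that $\dot{r}=w\cos\theta$ and $\dot{h}=w\sin\theta$. Reading (I) and (II) as a linear system for $(r,h)$ with coefficients depending on $\theta$, one finds that its determinant is the constant $\nu(\mu-1)$; solving it expresses $r=\alpha(\theta)\,w$ and $h=\beta(\theta)\,w$ with explicit trigonometric $\alpha,\beta$. Imposing the compatibility conditions $\dot{r}=w\cos\theta$ and $\dot{h}=w\sin\theta$ then produces the relation $\beta\cos\theta-\alpha\sin\theta=\beta\dot{\alpha}-\alpha\dot{\beta}$, which must hold identically in $\theta$. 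Expanding this into a trigonometric polynomial and matching coefficients, for instance by evaluating at $\theta=0$ and $\theta=\frac{\pi}{2}$, shows it is inconsistent unless $\mu=\nu$, the desired contradiction. I would finish by separately disposing of the degenerate subcases $\nu=0$ and $\mu=1$ (where the determinant vanishes) and $\sin\theta\equiv0$ or $\cos\theta\equiv0$ (which force $K=0$ and are excluded), thereby completing the proof that only the catenoid and the sphere occur.
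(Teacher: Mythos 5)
You should first know that the paper does not prove Theorem \ref{TC} at all: it is imported from \cite{R14} as a known result, so there is no in-paper proof to compare yours against. The closest internal benchmark is the proof of Theorem \ref{T2.1}, which works with the explicit coordinate expression (\ref{eq 4}) of $\Delta^{III}$ and matches coordinate functions; your argument instead runs through the invariant formula (\ref{r11}) and a normal/tangential frame decomposition, i.e.\ the Section 2 machinery. Structurally the two are parallel (linear independence kills most entries of $\Lambda$, then an ODE analysis forces minimality or the sphere), and your proposal is essentially sound: the Fourier argument in $v$ does give $\Lambda=\mathrm{diag}(\mu,\mu,\nu)$ (one needs $K\neq0$ to exclude $h\equiv\mathrm{const}$); the normal and tangential parts of (\ref{r12}) are exactly your (I) and (II); and differentiating (I) with $r''=-\theta'h'$, $h''=\theta'r'$ and eliminating $\mu rr'+\nu hh'$ via (II) does yield $2\Phi'=(\mu-\nu)r'h'$. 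The case $\mu=\nu$ then gives the sphere centred on the axis (when $\mu\neq0$, from $(r^{2}+h^{2})'=0$) and the catenoid (when $\mu=0$, from $H\equiv0$ and Euler's classification), exactly as you say.

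The one genuine gap is in the exclusion of $\mu\neq\nu$. Your skeleton is right: the determinant of your linear system is indeed the constant $\nu(\mu-1)$, and the compatibility relation $\beta\cos\theta-\alpha\sin\theta=\beta\dot\alpha-\alpha\dot\beta$ is the correct obstruction. But the proposed shortcut of evaluating at $\theta=0$ and $\theta=\tfrac{\pi}{2}$ does not close the argument: after clearing constants the relation is a quadratic polynomial identity in $\cos^{2}\theta$, hence carries three coefficient equations, and two point evaluations leave spurious solutions — for instance $(\mu,\nu)=(\tfrac12,-\tfrac12)$ passes both evaluations yet violates the identity. You must match all three coefficients; doing so (with $k=\mu-\nu\neq0$) the $\cos^{4}\theta$ coefficient reads $2(\mu-1)k^{2}=(\mu-2)k^{2}$ and forces $\mu=0$, the $\cos^{2}\theta$ coefficient then forces $\nu=2$ (recall $\nu\neq0$ in this branch), and the constant term collapses to $8=-4$, the desired contradiction — so your claim is true, but only the full matching proves it. Two smaller points to write out: the identity is a priori valid only on the $\theta$-interval swept by the meridian (which never contains $\theta=0$, since $K\neq0$ forces $\sin\theta\neq0$), so you should note it extends to all $\theta$ because both sides are trigonometric polynomials; and the deferred degenerate branches do need their separate lines — for $\nu=0$, $\mu\neq0$ the equations force $\mu\sin^{2}\theta=-1$, impossible for non-constant $\theta$, while for $\mu=1$ they force $\tfrac{1-\nu}{2}\cos^{2}\theta=1$ identically, equally impossible. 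With those details supplied, your proof is complete.
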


Recently, the same authors in \cite{R1} studied the class of ruled surfaces and the class of quadric surfaces. More precisely, they proved that

\begin{theorem}
\label{TA} The only \textit{ruled surfaces in the 3-dimensional Euclidean space that satisfies (\ref{r12}), are the helicoids.}
\end{theorem}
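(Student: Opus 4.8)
The plan is to reduce the statement to a rigidity computation for \emph{skew} (non-developable) ruled surfaces. Since the standing hypothesis of Section~2 is that the Gaussian curvature never vanishes, and since $III$ is a genuine metric (so that $\Delta^{III}$ is even defined) precisely when $K\neq0$, developable ruled surfaces are excluded from the outset; thus I only have to treat surfaces $\boldsymbol{x}(u,v)=\boldsymbol{\sigma}(u)+v\,\boldsymbol{e}(u)$ written in the standard striction parametrization, with $|\boldsymbol{e}|=|\boldsymbol{e}'|=1$ and $\langle\boldsymbol{\sigma}',\boldsymbol{e}'\rangle=0$. First I would record the three classical invariants $p=\langle\boldsymbol{\sigma}',\boldsymbol{e}\rangle$, the distribution parameter $q=\langle\boldsymbol{\sigma}',\boldsymbol{e}\times\boldsymbol{e}'\rangle$, and the conical curvature $\kappa$ defined by $\boldsymbol{e}''=-\boldsymbol{e}+\kappa\,\boldsymbol{e}\times\boldsymbol{e}'$, all functions of $u$ alone. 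A direct computation in the orthonormal frame $\{\boldsymbol{e},\boldsymbol{e}',\boldsymbol{e}\times\boldsymbol{e}'\}$ then yields the unit normal $\boldsymbol{n}=W^{-1}\big(q\,\boldsymbol{e}'-v(\boldsymbol{e}\times\boldsymbol{e}')\big)$ with $W=\sqrt{q^{2}+v^{2}}$, the curvature $K=-q^{2}/W^{4}$ (confirming $q\neq0$), and an explicit expression for $\phi:=\tfrac{2H}{K}=\tfrac{1}{a}+\tfrac{1}{c}$ of the form $\phi=W\big(pq+q'v+\kappa W^{2}\big)/q^{2}$.

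Next I would feed this into the basic formula (\ref{r11}), $\Delta^{III}\boldsymbol{x}=grad^{III}\phi-\phi\,\boldsymbol{n}$, and impose (\ref{r12}), i.e.\ $grad^{III}\phi-\phi\,\boldsymbol{n}=\Lambda\boldsymbol{x}$. The cleanest first cut is the normal component: since $grad^{III}\phi$ is tangent, pairing with $\boldsymbol{n}$ gives $\langle\Lambda\boldsymbol{x},\boldsymbol{n}\rangle=-\phi$. Writing $\boldsymbol{x}=\boldsymbol{\sigma}+v\boldsymbol{e}$ and using the formula for $\boldsymbol{n}$, the left-hand side is $W^{-1}$ times a polynomial in $v$ of degree $2$ whose coefficients depend only on $u$, while $-\phi$ is $W$ times a polynomial. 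Multiplying through by $W$ turns this into an honest polynomial identity in $v$ (all square roots disappear because $W^{2}=q^{2}+v^{2}$); comparing the coefficients of $v^{4}$ and $v^{3}$ forces $\kappa\equiv0$ and $q'\equiv0$, so the spherical image of the rulings is a great circle and the distribution parameter is constant.

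With $\kappa=0$ and $q$ constant the expressions simplify drastically: the $III$-metric becomes orthogonal with inverse coefficients $(III)^{11}=W^{2}/q^{2}$, $(III)^{12}=0$, $(III)^{22}=W^{4}/q^{2}$, and $\phi$ reduces to $pW/q$. Computing $grad^{III}\phi=[(III)^{11}\phi_{u}]\boldsymbol{x}_{u}+[(III)^{22}\phi_{v}]\boldsymbol{x}_{v}$ and subtracting $\phi\,\boldsymbol{n}$, I would read off the $\boldsymbol{e}$-component of (\ref{r12}): on the right it is $\langle\Lambda\boldsymbol{x},\boldsymbol{e}\rangle=\langle\Lambda\boldsymbol{\sigma},\boldsymbol{e}\rangle+v\langle\Lambda\boldsymbol{e},\boldsymbol{e}\rangle$, which is affine in $v$, whereas the computed left-hand side equals $p\,(q^{2}+v^{2})(p'+v)\,W/q^{3}$. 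Invoking once more that $W=\sqrt{q^{2}+v^{2}}$ is not a rational function of $v$, the coefficient multiplying $W$ must vanish identically, i.e.\ $p(p'+v)\equiv0$ in $v$; since $q^{2}+v^{2}$ never vanishes this is impossible unless $p\equiv0$. Hence $\phi\equiv0$, the surface is minimal by Theorem~\ref{T1}, and the surviving invariants $p=0$, $\kappa=0$, $q=\mathrm{const}$ integrate the frame equations to $\boldsymbol{e}(u)=\cos u\,\boldsymbol{a}+\sin u\,\boldsymbol{b}$ and $\boldsymbol{\sigma}(u)=\boldsymbol{\sigma}_{0}+qu\,(\boldsymbol{a}\times\boldsymbol{b})$, which is exactly a helicoid (with $\Lambda=0$).

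The conceptual heart---and the step I expect to give the most trouble in execution---is not any single algebraic manipulation but the repeated use of the \emph{irrationality of} $W=\sqrt{q^{2}+v^{2}}$: it is what separates each frame-component equation into a ``$W$-part'' and a ``polynomial part'' that must vanish independently, collapsing the a~priori nine scalar conditions coming from the matrix $\Lambda$ down to the three invariant equations $\kappa=0$, $q'=0$, $p=0$. The genuinely laborious part will be the faithful computation of the inverse $III$-metric and of $grad^{III}\phi$ \emph{before} the simplifications $\kappa=0$, $q'=0$ are available; organizing that bookkeeping so that the $v$-degree comparisons stay transparent is where care is needed, but the outcome is forced.
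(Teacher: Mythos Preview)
The paper itself contains \emph{no proof} of Theorem~\ref{TA}: the result is quoted from \cite{R1} as background, and the present article only carries out the analogous program for translation surfaces. Consequently there is no ``paper's own proof'' to compare your proposal against.

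That said, your argument is sound and is precisely the kind of computation one expects. Parametrizing a non-developable ruled surface by its line of striction with $|\boldsymbol{e}|=|\boldsymbol{e}'|=1$ and $\langle\boldsymbol{\sigma}',\boldsymbol{e}'\rangle=0$, the invariants $p,q,\kappa$ are the natural data, and your expression $\phi=2H/K=W(pq+q'v+\kappa W^{2})/q^{2}$ is correct. The two-step reduction---first the normal component of (\ref{r12}) to force $\kappa=0$ and $q'=0$, then the $\boldsymbol{e}$-component to force $p=0$---works exactly as you describe, and the mechanism (a polynomial in $v$ equals $W=\sqrt{q^{2}+v^{2}}$ times a polynomial only if both sides vanish) is the right lever. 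Once $p=\kappa=0$ and $q$ is constant, the Frenet-type system integrates to a helicoid, and $\Lambda=0$ follows.

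If one wanted to mirror the style of the present paper (and, presumably, of \cite{R1}), one would instead apply the explicit coordinate form of $\Delta^{III}$ to the three coordinate functions of a standard ruled parametrization and equate with $\Lambda\boldsymbol{x}$ componentwise, then differentiate to kill the $\Lambda$-entries one by one---this is exactly what is done for translation surfaces in Section~4. Your frame-based approach is more geometric and shorter, since it bypasses the lengthy calculation of $\Delta^{III}$ in coordinates and exploits (\ref{r11}) directly; the trade-off is that the bookkeeping of $grad^{III}\phi$ before the simplifications $\kappa=0$, $q'=0$ must be done carefully, as you note. Either route reaches the same conclusion.
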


\begin{theorem}
\label{TB} The only quadric surfaces in the 3-dimensional \textit{Euclidean } space that satisfies (\ref{r12}), are the spheres.
\end{theorem}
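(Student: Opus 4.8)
The plan is to use the basic formula (\ref{r11}) together with the fact that, for a quadric, the unit normal is algebraically tied to the position vector; this reduces (\ref{r12}) to a polynomial identity that only the sphere can satisfy. First I would observe that, since $III$ presupposes a nowhere-vanishing Gaussian curvature $K$, the only quadrics entering the discussion are the ellipsoids, the hyperboloids of one and two sheets, and the elliptic and hyperbolic paraboloids; cones, cylinders and planes are excluded because they are developable ($K\equiv 0$). After a rigid motion, every central quadric can be written as $\langle A\boldsymbol{x},\boldsymbol{x}\rangle=1$ with $A=\mathrm{diag}(a_{1},a_{2},a_{3})$, $\det A\neq 0$, and every paraboloid as $z=\tfrac{1}{2}(a_{1}x^{2}+a_{2}y^{2})$.

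The key reduction is to project (\ref{r12}) onto the normal. Since $grad^{III}(2H/K)$ is tangent to $S$, formula (\ref{r11}) yields at once the scalar relation
\begin{equation*}
\langle \Lambda\boldsymbol{x},\boldsymbol{n}\rangle=-\frac{2H}{K}.
\end{equation*}
For a central quadric one has $\boldsymbol{n}=A\boldsymbol{x}/|A\boldsymbol{x}|$, and the implicit-surface formulas give $K=\det A/|A\boldsymbol{x}|^{4}$ and (up to the choice of orientation) $2H=\bigl(\boldsymbol{x}^{\top}A^{3}\boldsymbol{x}-(\mathrm{tr}\,A)\,\boldsymbol{x}^{\top}A^{2}\boldsymbol{x}\bigr)/|A\boldsymbol{x}|^{3}$, so that both $2H/K$ and $\langle\Lambda\boldsymbol{x},\boldsymbol{n}\rangle$ differ from genuine polynomials only by the factor $|A\boldsymbol{x}|$. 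Clearing that factor turns the normal relation into an equality of polynomials valid on $S$; using that $S$ lies in the ideal generated by $\langle A\boldsymbol{x},\boldsymbol{x}\rangle-1$, I would promote it to the homogeneous identity
\begin{equation*}
\langle A\boldsymbol{x},\boldsymbol{x}\rangle\,\langle \Lambda\boldsymbol{x},A\boldsymbol{x}\rangle=-\frac{1}{\det A}\bigl(\boldsymbol{x}^{\top}A^{3}\boldsymbol{x}-(\mathrm{tr}\,A)\,\boldsymbol{x}^{\top}A^{2}\boldsymbol{x}\bigr)\,\boldsymbol{x}^{\top}A^{2}\boldsymbol{x}
\end{equation*}
in $\mathbb{R}[x_{1},x_{2},x_{3}]$.

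Then I would compare coefficients. The right-hand side contains only the monomials $x_{i}^{2}x_{j}^{2}$, so matching forces the symmetric part of $\Lambda^{\top}A$ to be diagonal, hence $\Lambda=\mathrm{diag}(\lambda_{1},\lambda_{2},\lambda_{3})$. Equating the coefficients of $x_{i}^{4}$ gives $\lambda_{i}=a_{i}(a_{j}+a_{k})/(a_{j}a_{k})$, while equating those of $x_{i}^{2}x_{j}^{2}$ gives $\lambda_{i}+\lambda_{j}=(a_{i}+a_{j}+2a_{k})/a_{k}$; substituting the former into the latter collapses to $(a_{i}-a_{j})^{2}=0$ for every pair, i.e. $a_{1}=a_{2}=a_{3}$. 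Thus $A$ is a scalar matrix and $S$ is a sphere, the mixed-sign cases of the hyperboloids being excluded outright since $a_{1}=a_{2}=a_{3}$ is incompatible with a change of signature. For the paraboloids the same normal relation, after clearing $|\nabla F|$, becomes a polynomial identity whose right-hand side has a nonzero part of degree four in $(x,y)$ whereas the left-hand side has degree at most three; this contradiction eliminates both paraboloid types.

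The main obstacle I anticipate is the passage from ``identity on the surface'' to ``identity of polynomials'': one must argue carefully with the ideal of the quadric, and keep track of how the irrational factor $|A\boldsymbol{x}|$ clears, before the coefficient comparison can be made; one must also check that every degenerate eigenvalue configuration of $A$ is covered. Notably, the tangential part of (\ref{r12}) is not needed for the classification, since it only fixes the skew part of $\Lambda$; the whole argument is driven by the single scalar equation $\langle\Lambda\boldsymbol{x},\boldsymbol{n}\rangle=-2H/K$.
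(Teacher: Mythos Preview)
The paper you are comparing against does \emph{not} contain a proof of Theorem~\ref{TB}: it is quoted from \cite{R1} as a known result and is used here only as background for the translation-surface theorem. So there is no ``paper's own proof'' to set your argument next to. What can be said is that the method actually carried out in this paper (for translation surfaces) and in the companion papers \cite{R1,R14} proceeds by writing down an explicit parametrization of each surface in the class, computing $\Delta^{III}$ on the coordinate functions, and equating coefficients to force all entries of $\Lambda$ to vanish unless the surface is of the exceptional type. Your approach is structurally different: you avoid parametrizations altogether, project (\ref{r11}) onto $\boldsymbol{n}$ to obtain the single scalar identity $\langle\Lambda\boldsymbol{x},\boldsymbol{n}\rangle=-2H/K$, and then exploit the algebraic form of $\boldsymbol{n}$, $H$, $K$ for an implicit quadric to turn this into a polynomial identity whose coefficient comparison forces $a_{1}=a_{2}=a_{3}$. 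This is more conceptual and shorter; it also explains clearly why the tangential part of (\ref{r12}) is irrelevant for the classification.

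Your outline is essentially correct, but two points deserve tightening. First, from the vanishing of the mixed monomials on the left you only obtain $a_{i}\lambda_{ij}+a_{j}\lambda_{ji}=0$ for $i\neq j$, i.e.\ the \emph{symmetric part} of $A\Lambda$ is diagonal; this does not force $\Lambda$ itself to be diagonal. That is harmless for your purpose, since only the diagonal entries $\lambda_{ii}$ enter the remaining equations, but the sentence ``hence $\Lambda=\mathrm{diag}(\lambda_{1},\lambda_{2},\lambda_{3})$'' should be weakened accordingly. Second, the ``promotion'' of the on-surface identity to a polynomial identity in $\mathbb{R}[x_{1},x_{2},x_{3}]$ is the genuine crux and should be argued, not asserted: for a central quadric $\langle A\boldsymbol{x},\boldsymbol{x}\rangle=1$ one rescales any direction with $Q(\boldsymbol{v})>0$ onto $S$, so an equality $P_{2}=P_{4}$ of homogeneous forms of degrees $2$ and $4$ on $S$ yields $Q\,P_{2}=P_{4}$ on the open cone $\{Q>0\}$, hence identically. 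For the hyperboloids this cone is still open and nonempty, so the step goes through; you should say this explicitly, since a reader might otherwise suspect that the indefinite signature causes trouble. With these two clarifications the argument is complete; the paraboloid case is handled exactly as you describe, by a clean degree count in the graph variables $(x,y)$.
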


The present paper contributes to the solution of problem \ref{p2}, for the class of translation surfaces in the Euclidean 3-space $E^{3}$, meanwhile
problem \ref{p1} is still unsolved for this family of surfaces.

\section{Translation surfaces}

Let $S$ : $M^{2}\rightarrow E^{3}$ be a translation surface in the Euclidean 3-space. Then the position vector $\boldsymbol{x}$ of $S$ is given by \cite{R25}

\begin{equation}
S:\boldsymbol{x}(s,t)=\left\{ s,\ t,\ \widetilde{f}(s)+\widetilde{h}%
(t)\right\} ,\ \ \ \ \ \ \ \ (s,t)\in D\subset M^{2},  \label{1}
\end{equation}%
where $\widetilde{f}(s)$ and $\widetilde{h}(t)$ are two sufficiently differentiable functions on $S$. We put

\begin{equation*}
f:=\frac{d\widetilde{f}}{ds},\ \ \ \ h:=\frac{d\widetilde{h}}{dt}.
\end{equation*}

It is easily verified that the first and the second fundamental forms of $S$ are given by
\begin{align*}
I& =(1+f^{2})ds^{2}+2fhdsdt+(1+h^{2})dt^{2}, \\
II& =\frac{f_{s}}{\sqrt{\mu }}ds^{2}+\frac{h_{t}}{\sqrt{\mu }}dt^{2},
\end{align*}%
where

\begin{equation*}
f_{s}:=\frac{df}{ds},\ \ \ \ h_{t}:=\frac{dh}{dt}
\end{equation*}%
and $\mu :=\det (g_{ij})=1+f^{2}+h^{2}.$

The Gauss and mean curvatures of $S,$ are respectively

\begin{equation*}
K=\frac{f_{s}h_{t}}{\mu ^{2}},
\end{equation*}

\begin{equation}
2H=\frac{(1+f^{2})h_{t}+(1+h^{2})f_{s}}{\mu \sqrt{\mu }}.  \label{2}
\end{equation}

Since $S$ does not contain parabolic points, so $f_{s}\neq 0$ and $h_{t}\neq0$ for each $(s,t)\in D.$

Our main result is the following

\begin{theorem}
\label{T2.1} The only\textit{\ translation surface in the 3-dimensional Euclidean space that satisfies (\ref{r12}) is Scherk's surface.}
\end{theorem}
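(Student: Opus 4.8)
The plan is to feed the translation parametrization (\ref{1}) into the master formula (\ref{r11}) and convert the vector equation (\ref{r12}) into three scalar identities in $(s,t)$. First I would assemble the geometric data. With $\mu=1+f^{2}+h^{2}$, the Gauss map is $\boldsymbol{n}=\mu^{-1/2}(-f,-h,1)$, and from (\ref{2}) together with $K=f_{s}h_{t}\mu^{-2}$ one obtains
\[
\frac{2H}{K}=\sqrt{\mu}\left(\frac{1+f^{2}}{f_{s}}+\frac{1+h^{2}}{h_{t}}\right)=:\sqrt{\mu}\,(P+Q),
\]
where $P=P(s)$ depends on $s$ alone and $Q=Q(t)$ on $t$ alone. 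Computing $III=d\boldsymbol{n}\cdot d\boldsymbol{n}$ in the $(s,t)$ chart and inverting the resulting matrix gives an explicit $grad^{III}(2H/K)=A\,\boldsymbol{x}_{s}+B\,\boldsymbol{x}_{t}$, where $A$ and $B$ are rational in $f,h,f_{s},h_{t},P_{s},Q_{t}$ but carry overall factors $\mu^{3/2}$.

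Writing $R:=P+Q$ so that $2H/K=\sqrt{\mu}\,R$, formula (\ref{r11}) together with $\boldsymbol{x}_{s}=(1,0,f)$ and $\boldsymbol{x}_{t}=(0,1,h)$ yields
\[
\Delta^{III}\boldsymbol{x}=\big(A+fR,\; B+hR,\; Af+Bh-R\big).
\]
Equation (\ref{r12}) then becomes three scalar equations whose $k$-th right-hand side is $\lambda_{k1}s+\lambda_{k2}t+\lambda_{k3}(\tilde f+\tilde h)$. The decisive remark is that each such right-hand side is a sum of a function of $s$ (namely $\lambda_{k1}s+\lambda_{k3}\tilde f(s)$) and a function of $t$; hence applying the mixed operator $\partial_{s}\partial_{t}$ annihilates the entire matrix part and leaves three identities in $f,h$ and their derivatives with $\Lambda$ eliminated.

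The core of the proof is the analysis of these three $\Lambda$-free identities. The separable pieces of $A,B$ (those built from $P_{s},Q_{t}$) and the purely rational pieces of $fR,hR$ contribute rational expressions in $f,h$ under $\partial_{s}\partial_{t}$, whereas the remaining contributions enter multiplied by the non-separable factor $\sqrt{\mu}=\sqrt{1+f^{2}+h^{2}}$. Since $\sqrt{\mu}$ cannot be balanced against rational terms, I expect its coefficient to be forced to vanish, which amounts to $P+Q\equiv0$, i.e.\ $2H=0$. I anticipate that this step --- disentangling the coupled tangential equations and, in particular, excluding the possibility $P+Q\equiv\mathrm{const}\neq0$ (which would produce a non-minimal translation surface) --- is the main obstacle. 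This is consistent with Theorem \ref{T1}: once $2H=0$ we have $\Delta^{III}\boldsymbol{x}=\mathbf{0}$, so (\ref{r12}) holds with $\Lambda$ the zero matrix.

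Finally I would integrate the minimality condition. From $P+Q=0$ with $P=P(s)$ and $Q=Q(t)$, both terms must be constant, say $P=c$ and $Q=-c$ with $c\neq0$; thus $\dfrac{1+f^{2}}{f_{s}}=c$, which integrates to $f=\tan\frac{s-s_{0}}{c}$ and $\tilde f=-c\ln\cos\frac{s-s_{0}}{c}$, and likewise $\tilde h=c\ln\cos\frac{t-t_{0}}{c}$. The resulting profile $\tilde f(s)+\tilde h(t)=c\,\ln\dfrac{\cos\frac{t-t_{0}}{c}}{\cos\frac{s-s_{0}}{c}}$ is exactly Scherk's minimal translation surface, for which $f_{s},h_{t}\neq0$ and hence $K\neq0$ everywhere, as the standing hypotheses require. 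This shows that Scherk's surface is the only translation surface satisfying (\ref{r12}).
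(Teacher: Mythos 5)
Your proposal fails at its foundational step: the component formula you write for $\Delta^{III}\boldsymbol{x}$ is incorrect, because you have misread the gradient term in (\ref{r11}). In the paper's derivation, $grad^{III}\big(\tfrac{2H}{K}\big)$ stands for $\big(\widetilde{\nabla}_{1}\tfrac{2H}{K}\big)\boldsymbol{\varepsilon_{1}}+\big(\widetilde{\nabla}_{2}\tfrac{2H}{K}\big)\boldsymbol{\varepsilon_{2}}$, where the $\widetilde{\nabla}_{i}$ are Pfaffian derivatives with respect to $\omega_{31},\omega_{32}$ and $d\boldsymbol{n}=\omega_{31}\boldsymbol{\varepsilon_{1}}+\omega_{32}\boldsymbol{\varepsilon_{2}}$. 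In other words, this vector is the $III$-gradient realized in $E^{3}$ \emph{through the Gauss map}: in the chart $(s,t)$ it equals $G^{1}\boldsymbol{n}_{s}+G^{2}\boldsymbol{n}_{t}$ with $G^{i}=III^{ij}\partial_{j}(2H/K)$, and not $G^{1}\boldsymbol{x}_{s}+G^{2}\boldsymbol{x}_{t}$, which is what your $A\boldsymbol{x}_{s}+B\boldsymbol{x}_{t}$ is. (The two realizations differ by composition with the shape operator, since $d\boldsymbol{n}=-d\boldsymbol{x}\circ S$.) The factors $\mu^{3/2}$ you found in $A,B$ are exactly cancelled by the factors $\mu^{-3/2}$ carried by $\boldsymbol{n}_{s},\boldsymbol{n}_{t}$, so the true components of $\Delta^{III}\boldsymbol{x}$ are \emph{rational} in $f,h,f_{s},h_{t},f_{ss},h_{tt}$, with no $\sqrt{\mu}$ anywhere; indeed (\ref{eq 4}) gives $\Delta^{III}x_{1}=M(s)\mu$, $\Delta^{III}x_{2}=N(t)\mu$ and $\Delta^{III}x_{3}=\big(Mf+Nh-\tfrac{1+f^{2}}{f_{s}}-\tfrac{1+h^{2}}{h_{t}}\big)\mu$. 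A concrete check: for $\widetilde{f}=s^{2}/2$, $\widetilde{h}=t^{2}/2$ a direct computation yields $\Delta^{III}s=-2s\mu$, whereas your first component $A+fR$ equals $\mu^{3/2}s(1+3\mu)+s(1+\mu)$.

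This is not a cosmetic slip, because your central mechanism depends on it: you propose to extract minimality by setting to zero ``the coefficient of $\sqrt{\mu}$'' in the mixed-derivative identities, but in the correct equations there is no $\sqrt{\mu}$ at all, hence no such coefficient and no rational-versus-irrational dichotomy; the exclusion of non-minimal surfaces must come from elsewhere. (Your $\partial_{s}\partial_{t}$ trick for eliminating $\Lambda$ is sound, and is essentially what the paper does, but it has to be applied to the rational expressions above.) Moreover, even on its own terms your argument is only announced, not executed --- ``I expect'', ``I anticipate'' --- precisely at the step you yourself identify as the main obstacle. For comparison, the paper's route is: differentiating $M(s)\mu=\lambda_{11}s+\lambda_{12}t+\lambda_{13}(\widetilde{f}+\widetilde{h})$ in $t$ forces $M$ (and symmetrically $N$) to be a constant $c_{1}$ (resp.\ $c_{2}$); cross-differentiation of the third equation gives $4f_{s}h_{t}(c_{1}f+c_{2}h)=0$, hence $c_{1}=c_{2}=0$, so $\tfrac{1+f^{2}}{f_{s}}=\tfrac{1}{a_{1}}$ and $\tfrac{1+h^{2}}{h_{t}}=\tfrac{1}{b_{1}}$ are constants; finally, substituting $f=\tan(a_{1}s+a_{2})$ into the surviving equation and using the linear independence of $z^{3}$, $z^{2}\sqrt{1-z^{2}}$ and $\sqrt{1-z^{2}}$ forces $\lambda_{31}=\lambda_{32}=\lambda_{33}=0$ and $\tfrac{1}{a_{1}}+\tfrac{1}{b_{1}}=0$, i.e.\ minimality, whence Proposition \ref{C2.1} gives Scherk's surface with $\Lambda$ the zero matrix.
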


Our discussion is local, which means that we show in fact that any open part of a translation surface satisfies (\ref{r12}), if it is minimal.

Before we start proving our main result we prove the following

\begin{proposition}
\label{C2.1} A \textit{minimal translation surface with parametric represantation (\ref{1}) has the form }%
\begin{equation}
S:\boldsymbol{x}(s,t)=\left\{ s,\ t,\ c-\frac{1}{a_{1}}\ln \left\vert \cos
(a_{1}s+a_{2})\right\vert +\frac{1}{a_{1}}\ln \left\vert \cos
(-a_{1}t+b_{2})\right\vert \right\} ,  \label{3}
\end{equation}%
\begin{equation*}
(s,t)\in D\subset M^{2},
\end{equation*}

where $a_{1},a_{2},b_{2}$ and $c$ are integration constants.\footnote{This proposition is due to H. F. Scherk. For a more details, the reader is referred to \cite{R17}.}
\end{proposition}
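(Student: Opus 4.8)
The plan is to impose minimality, $H=0$, directly on the expression (\ref{2}) and exploit the fact that the resulting equation separates the $s$- and $t$-dependence. Setting the numerator of (\ref{2}) equal to zero yields
\begin{equation*}
(1+f^{2})h_{t}+(1+h^{2})f_{s}=0.
\end{equation*}
Since $f$ and $f_{s}$ are functions of $s$ alone, while $h$ and $h_{t}$ are functions of $t$ alone, I would recast this as
\begin{equation*}
\frac{f_{s}}{1+f^{2}}=-\frac{h_{t}}{1+h^{2}}.
\end{equation*}
The left-hand side depends only on $s$ and the right-hand side only on $t$, so on the connected domain $D$ both sides must equal a common constant, which I name $a_{1}$. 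The standing hypothesis that $S$ contains no parabolic points (so $f_{s}\neq 0$ and $h_{t}\neq 0$) guarantees $a_{1}\neq 0$, which is exactly what permits the division by $a_{1}$ in the integrations below.

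Next I would integrate the two resulting ordinary differential equations. From $f_{s}/(1+f^{2})=a_{1}$ one obtains $\arctan f=a_{1}s+a_{2}$, hence $f=\tan (a_{1}s+a_{2})$; similarly $-h_{t}/(1+h^{2})=a_{1}$ gives $h=\tan (-a_{1}t+b_{2})$, where $a_{2}$ and $b_{2}$ are constants of integration.

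Finally, recalling that $f=d\widetilde{f}/ds$ and $h=d\widetilde{h}/dt$, I would integrate once more. Using $\int \tan u\,du=-\ln |\cos u|$, a direct computation gives
\begin{equation*}
\widetilde{f}(s)=-\frac{1}{a_{1}}\ln |\cos (a_{1}s+a_{2})|,\qquad \widetilde{h}(t)=\frac{1}{a_{1}}\ln |\cos (-a_{1}t+b_{2})|,
\end{equation*}
each determined up to an additive constant. Collecting these constants into a single constant $c$ and substituting into (\ref{1}) reproduces (\ref{3}) exactly.

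This argument is essentially computational, so there is no deep obstacle to overcome. The only step requiring genuine care is the separation of variables: one must invoke the connectedness of $D$ to pass from ``a function of $s$ equals a function of $t$'' to ``both are constant,'' and then use the non-parabolicity hypothesis to exclude the degenerate value $a_{1}=0$, which would force $f_{s}\equiv 0$ and contradict the assumptions. Everything else reduces to the two elementary integrations indicated above.
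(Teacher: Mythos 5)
Your proposal is correct and follows essentially the same route as the paper: impose $H=0$ on (\ref{2}), separate the $s$- and $t$-dependence into a common constant, and integrate twice to reach (\ref{3}). The only cosmetic difference is that you separate the reciprocal quantity $\frac{f_{s}}{1+f^{2}}=-\frac{h_{t}}{1+h^{2}}=a_{1}$, which makes the constants match the statement directly (and makes $a_{1}\neq 0$ immediate from $f_{s}\neq 0$), whereas the paper sets $\frac{1+f^{2}}{f_{s}}=a_{1}$, $\frac{1+h^{2}}{h_{t}}=b_{1}=-a_{1}$ and silently renames constants before integrating.
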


\begin{proof}
From (\ref{2}) we obtain that $S$ is minimal if and only if

\begin{equation}
\frac{1+f^{2}}{f_{s}}+\frac{1+h^{2}}{h_{t}}=0.  \label{e1}
\end{equation}

Since the first factor of this equation depends only on the parameter $s$ and the second factor depends only on the parameter $t,$ (\ref{e1}) implies
that $\frac{1+f^{2}}{f_{s}}=a_{1}$ and $\frac{1+h^{2}}{h_{t}}=b_{1},$ where $a_{1},$ $b_{1}$ are constants and $a_{1}+$ $b_{1}=0.$

Integrating each factor with respect to the corresponding parameter, we obtain

\begin{equation*}
f(s)=\tan (a_{1}s+a_{2}),\ \ \ \ \ \ h(t)=\tan (b_{1}t+b_{2}).
\end{equation*}

where $a_{2},$ $b_{2}$ are integration constants. Integrating again, it then follows that

\begin{equation*}
\widetilde{f}(s)=d_{1}-\frac{1}{a_{1}}\ln \left\vert \cos
(a_{1}s+a_{2})\right\vert ,\ \ \ \ \widetilde{h}(t)=d_{2}-\frac{1}{b_{1}}\ln
\left\vert \cos (b_{1}t+b_{2})\right\vert .
\end{equation*}

where $d_{1},$ $d_{2}$ are constants. Putting $b_{1}=-a_{1}$ and $d_{1}+d_{2}=c$. Thus we have\textit{\ }(\ref{3}).
\end{proof}

\section{Proof of Theorem \protect\ref{T2.1}}

As in section 2, a parametric represantation of a translation surface $S$ in $E^{3}$ is

\begin{equation*}
S:\boldsymbol{x}(s,t)=\left\{ s,\ t,\ \widetilde{f}(s)+\widetilde{h}%
(t)\right\} ,\ \ \ \ \ \ \ \ (s,t)\in D\subset M^{2},
\end{equation*}%
where $\widetilde{f}(s),\widetilde{\ h}(t)$ are two sufficiently differentiable functions on $S$. For convenience, we put

\begin{equation*}
M(s)=\frac{(1+f^{2})f_{ss}}{f_{s}^{3}}-\frac{2f}{f_{s}},\ \ \ \ N(t)=\frac{%
(1+h^{2})h_{tt}}{h_{t}^{3}}-\frac{2h}{h_{t}}.
\end{equation*}

Then the Beltrami operator with respect to the third fundamental form, after a lengthy computation, can be expressed as follows \cite{R0}

\begin{equation}
\Delta ^{III}=\left( -\frac{1+f^{2}}{f_{s}^{2}}\frac{\partial ^{2}}{\partial
s^{2}}-\frac{1+h^{2}}{h_{t}^{2}}\frac{\partial ^{2}}{\partial t^{2}}-\frac{%
2fh}{f_{s}h_{t}}\frac{\partial ^{2}}{\partial s\partial t}+M(s)\frac{%
\partial }{\partial s}+N(t)\frac{\partial }{\partial t}\right)\mu .
\label{eq 4}
\end{equation}

For later use, we remark that

\begin{equation}
-\frac{d}{ds}\left( \frac{1+f^{2}}{f_{s}}\right) =M(s)f_{s},  \label{4}
\end{equation}

\begin{equation}
-\frac{d}{dt}\left( \frac{1+h^{2}}{h_{t}}\right) =N(t)h_{t}.  \label{5}
\end{equation}

Applying (\ref{eq 4}) on the coordinate functions $x_{i},i=1,2,3$ of the position vector $\boldsymbol{x}$ of $S,$ we have

\begin{equation*}
\Delta ^{III}x_{1}=\Delta ^{III}s=M(s)\mu ,
\end{equation*}

\begin{equation*}
\Delta ^{III}x_{2}=\Delta ^{III}t=N(t)\mu ,
\end{equation*}

\begin{equation*}
\Delta ^{III}x_{3}=\Delta ^{III}(\widetilde{f}+\widetilde{h})=\left(
M(s)f+N(t)h-\frac{1+f^{2}}{f_{s}}-\frac{1+h^{2}}{h_{t}}\right) \mu .
\end{equation*}

We denote by $\lambda _{ij},i,j=1,2,3$ the entries of the matrix $\Lambda .\
$Using the last three equations and condition (\ref{r12}) one finds%
\begin{equation}
M(s)\mu =\lambda _{11}s+\lambda _{12}t+\lambda _{13}(\widetilde{f}+%
\widetilde{h}),  \label{eq 5}
\end{equation}

\begin{equation}
N(t)\mu =\lambda _{21}s+\lambda _{22}t+\lambda _{23}(\widetilde{f}+%
\widetilde{h}),  \label{eq 6}
\end{equation}

\begin{equation}
\left( M(s)f+N(t)h-\frac{1+f^{2}}{f_{s}}-\frac{1+h^{2}}{h_{t}}\right) \mu
=\lambda _{31}s+\lambda _{32}t+\lambda _{33}(\widetilde{f}+\widetilde{h}).
\label{eq 7}
\end{equation}

Differentiating (\ref{eq 5}) with respect to $t,$ we find

\begin{equation*}
2Mhh_{t}=\lambda _{12}+\lambda _{13}h.
\end{equation*}

Hence, $M(s)$ must be constant.
Similarly, if we take the derivative of (\ref{eq 6}) with respect to $s$ we
obtain that $N(t)$ must be constant. If we put $M(s)=c_{1}$ and $N(t)=c_{2}$%
, where $c_{1}$ and $c_{2}$ are constants, then equations (\ref{eq 5}), (\ref%
{eq 6}) and (\ref{eq 7}) become

\begin{equation}
c_{1}\mu =\lambda _{11}s+\lambda _{12}t+\lambda _{13}(\widetilde{f}+%
\widetilde{h}),  \label{eq 8}
\end{equation}

\begin{equation}
c_{2}\mu =\lambda _{21}s+\lambda _{22}t+\lambda _{23}(\widetilde{f}+%
\widetilde{h}),  \label{eq 9}
\end{equation}

\begin{equation}
\left( c_{1}f+c_{2}h-\frac{1+f^{2}}{f_{s}}-\frac{1+h^{2}}{h_{t}}\right) \mu
=\lambda _{31}s+\lambda _{32}t+\lambda _{33}(\widetilde{f}+\widetilde{h}).
\label{eq 10}
\end{equation}

Differentiating (\ref{eq 10}) with respect to $s,$ and taking into account (%
\ref{4}), we have

\begin{equation*}
2c_{1}f_{s}\mu +2\left( c_{1}f+c_{2}h-\frac{1+f^{2}}{f_{s}}-\frac{1+h^{2}}{%
h_{t}}\right) ff_{s}=\lambda _{31}+\lambda _{33}f.
\end{equation*}

Another derivation with respect to $t,$ and taking into account (\ref{5}),
gives

\begin{equation*}
4f_{s}h_{t}(c_{1}f+c_{2}h)=0.
\end{equation*}

Since the first factor can not be zero, we conclude that $c_{1}f+c_{2}h=0,$
and hence $c_{1}=c_{2}=0.$

From (\ref{eq 8}) and (\ref{eq 9}) we find that $\lambda
_{ij}=0,i=1,2,j=1,2,3,$ and (\ref{eq 10}) becomes

\begin{equation}
-\left( \frac{1+f^{2}}{f_{s}}+\frac{1+h^{2}}{h_{t}}\right) \mu =\lambda
_{31}s+\lambda _{32}t+\lambda _{33}(\widetilde{f}+\widetilde{h}).
\label{eq 11}
\end{equation}

Also, the system of equations (\ref{4}) and (\ref{5}) reduces to

\begin{equation}
\frac{d}{ds}\left( \frac{1+f^{2}}{f_{s}}\right) =0,  \label{eq 12}
\end{equation}

\begin{equation}
\frac{d}{dt}\left( \frac{1+h^{2}}{h_{t}}\right) =0.  \label{eq 13}
\end{equation}

From (\ref{eq 12}) and (\ref{eq 13}) we obtain

\begin{equation}
\frac{1+f^{2}}{f_{s}}=\frac{1}{a_{1}},  \label{eq 14}
\end{equation}

\begin{equation}
\frac{1+h^{2}}{h_{t}}=\frac{1}{b_{1}},  \label{eq 15}
\end{equation}%
where $\frac{1}{a_{1}}$ and $\frac{1}{b_{1}}$ are constants.

Consequently, equation (\ref{eq 11}) becomes

\begin{equation}
-\left( \frac{1}{a_{1}}+\frac{1}{b_{1}}\right) \mu =\lambda _{31}s+\lambda
_{32}t+\lambda _{33}(\widetilde{f}+\widetilde{h}).  \label{e2}
\end{equation}

By taking the derivative of (\ref{e2}) with respect to $s$, we find that

\begin{equation}
-2\left( \frac{1}{a_{1}}+\frac{1}{b_{1}}\right) ff_{s}=\lambda _{31}+\lambda
_{33}f,  \label{eq 16}
\end{equation}

Another integration of (\ref{eq 14}) and (\ref{eq 15}) gives

\begin{equation*}
f(s)=\tan (a_{1}s+a_{2}),\ \ \ \ \ \ h(t)=\tan (b_{1}t+b_{2}),
\end{equation*}%
where $a_{2},$ $b_{2}$ are integration constants.

Inserting $f(s)=\tan (a_{1}s+a_{2})$ and its derivation $f_{s}$ in (\ref{eq 16}), we get

\begin{equation*}
-2a_{1}\left( \frac{1}{a_{1}}+\frac{1}{b_{1}}\right) \frac{\sin
(a_{1}s+a_{2})}{\cos ^{3}(a_{1}s+a_{2})}=\lambda _{31}+\lambda _{33}\frac{%
\sin (a_{1}s+a_{2})}{\cos (a_{1}s+a_{2})}
\end{equation*}%
or, equivalently

\begin{equation*}
\lambda _{31}\cos ^{3}(a_{1}s+a_{2})+\lambda _{33}\sin (a_{1}s+a_{2})\cos
^{2}(a_{1}s+a_{2})
\end{equation*}

\begin{equation}
\ \ \ \ \ \ \ \ \ \ \ \ \ \ \ \ \ \ \ \ \ \ \ \ \ \ +2a_{1}\left( \frac{1}{%
a_{1}}+\frac{1}{b_{1}}\right) \sin (a_{1}s+a_{2})=0.  \label{eq 17}
\end{equation}

Notice that $a_{1}\neq 0,$ otherwise $f_{s}=0.$ Putting $z=\cos
(a_{1}s+a_{2}),$ (\ref{eq 17}) becomes

\begin{equation*}
\lambda _{31}z^{3}+\lambda _{33}z^{2}\sqrt{1-z^{2}}+2a_{1}\left( \frac{1}{%
a_{1}}+\frac{1}{b_{1}}\right) \sqrt{1-z^{2}}=0.
\end{equation*}

Since the functions $z^{3}$ and $z^{j}\sqrt{1-z^{2}}$, $0\leq j\leq 2$ are linearly independent, we have

\begin{equation*}
\lambda _{31}=\lambda _{33}=0,
\end{equation*}

\begin{equation}
\frac{1}{a_{1}}+\frac{1}{b_{1}}=0.  \label{eq 18}
\end{equation}

Finally, from (\ref{e2}) and the last two equations we find that $\lambda_{32}=0$. Also, from (\ref{eq 18}) we get $a_{1}+$ $b_{1}=0.$ Integrating $%
f(s)=\tan (a_{1}s+a_{2})$ with respect to $s,$ $h(t)=\tan (b_{1}t+b_{2})$ with respect to $t$ and using proposition \ref{C2.1}, we find that the only
translation surfaces that satisfies (\ref{r12}), are the minimals with corresponding matrix $\Lambda $ the null matrix. This ends the proof of our theorem.

\begin{acknowledgement}
The authors would like to express their thanks to the referee for his useful remarks.
\end{acknowledgement}

\end{document}